\documentclass[11pt]{amsart}
\usepackage{hyperref}
\hypersetup{nesting=true,debug=true,naturalnames=true}
\usepackage{graphicx,amssymb,upref}
\usepackage[autostyle]{csquotes}

\let\<\langle
\let\>\rangle

\let\uml\"


\title[Zero divisors in complex group algebra of torsion-free groups]{A necessary condition for zero divisors in complex group algebra of torsion-free groups} 

\author[A. Abdollahi]{Alireza  Abdollahi}  
\address{Department of Mathematics,  University of Isfahan, Isfahan, 81746-73441, Iran} 
\email{a.abdollahi@math.ui.ac.ir}
\author[M. Soleimani Malekan]{Meisam Soleimani Malekan}  
\address{Postdoctoral researcher, National Elites Foundation, Tehran, Iran} 
\email{m.soleimani@sci.ui.ac.ir} 
\thanks{} 

\keywords{Hilbert space $\ell^2(G)$; Complex group algebras; Zero divisor conjecture; Torsion-free groups}

\subjclass[2010]{46C07; 46L10; 20C07; 16S34}


\newcommand{\CF}{\mathcal F}
\newcommand{\FX}{\mathfrak X}
\newcommand{\FK}{\mathfrak K}
\newcommand{\bC}{\mathbb C}

\newcommand{\bR}{\mathbb R}

\newcommand{\se}[1]{\left\{#1\right\}}

\newcommand{\valu}[2]{\left\langle #1,#2\right\rangle}
\newcommand{\ra}[2]{#1\rightarrow #2}
\newcommand{\pr}[1]{\left(#1\right)}
\newcommand{\supp}[1]{\text{supp}\left(#1\right)}

     \newtheorem{theorem}{Theorem}[section]
     \newtheorem{lemma}{Lemma}[section]
     \newtheorem{proposition}{Proposition}[section]
     \newtheorem{corollary}{Corollary}[section]
     
     \theoremstyle{definition}
      \newtheorem{definition}{Definition}[section]

        \newtheorem{conj}{Conjecture}[section]

\begin{document} 
 
\begin{abstract}  
 It is proved that if $\sum_{g\in G} a_g g$ is a non-zero element of the complex group  algebra $\mathbb{C}G$ of a torsion-free group $G$ which is zero divisor then  $2\sum_{g\in G} |a_g|^2<\big( \sum_{g\in G} |a_g|\big)^2$. So, for example, elements in the form $3d+\sum_{k=1}^dx_k$ for a positive integer $d>2$, or $4+x+y$ are not zero divisors in the $\mathbb C$-group algebra, and hence in $\mathbb Z$-group algebra of an arbitrary torsion free group. 
\end{abstract} 
\maketitle

\section{Introduction}
Let $G$ be any group and  $\bC G$ be the complex group algebra of $G$, i.e. the set of finitely supported complex  functions on $G$. We may represent an element $\alpha$ in $\bC G$ as a formal sum $\sum_{g\in G}a_gg$, where $a_g\in\bC$ is the value of $\alpha$ in $g$. The multiplication in $\bC G$ is defined by 
\[ \alpha\beta=\sum_{g,h\in G}a_gb_hgh=\sum_{g\in G}\pr{\sum_{x\in G}a_{gx^{-1}}b_x}x\] 
for $\alpha=\sum_{g\in G}a_gg$ and $\beta=\sum_{g\in G}b_gg$ in $\bC G$. We shall say that $\alpha$ is a \textit{zero divisor} if there exists $0\neq\beta\in\bC G$ such that $\alpha\beta=0$. If there is a non-zero $\beta\in\ell^2(G)$ such that $\alpha\beta=0$, then we may say that $\alpha$ is \textit{analytical zero divisor}. If $\alpha\beta\neq0$ for all $0\neq\beta\in\bC G$, then we say that $\alpha$ is \textit{regular}. The following conjecture is called the \textit{zero divisor conjecture}.
\begin{conj}\label{ZDC}
Let $G$ be a torsion-free group. Then all elements in $\bC G$ are regular. 
\end{conj}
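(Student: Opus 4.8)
The plan is to convert the purely algebraic relation $\alpha\beta=0$ into a single scalar inequality on the Hilbert space $\ell^2(G)$, isolating the diagonal contribution $\sum_g|a_g|^2$ and controlling the off-diagonal terms by a matrix-coefficient function whose modulus is at most $1$ everywhere and \emph{strictly} less than $1$ away from the identity. The non-strict estimate $2\sum_g|a_g|^2\le\pr{\sum_g|a_g|}^2$ will hold for an arbitrary group; torsion-freeness enters only to force the strict gap.

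First I would unwind the hypothesis: choose $0\neq\beta=\sum_x b_x x\in\bC G$ with $\alpha\beta=0$ and normalize $\|\beta\|_2=1$, noting that $\beta$ is finitely supported. Writing $L_g$ for left translation by $g$ on $\ell^2(G)$, the relation reads $\sum_g a_g L_g\beta=0$. Expanding the squared norm and setting $\phi(k):=\langle L_k\beta,\beta\rangle$, one has $\langle L_g\beta,L_h\beta\rangle=\phi(h^{-1}g)$, with $\phi(e)=1$ and, by Cauchy--Schwarz, $|\phi(k)|\le\|L_k\beta\|_2\|\beta\|_2=1$ for all $k$. Splitting off the diagonal $g=h$ gives
\[ \sum_g|a_g|^2=-\sum_{g\neq h}a_g\overline{a_h}\,\phi(h^{-1}g), \]
so taking absolute values and using $|\phi|\le1$ yields $\sum_g|a_g|^2\le\sum_{g\neq h}|a_g||a_h|=\pr{\sum_g|a_g|}^2-\sum_g|a_g|^2$, i.e. the non-strict bound $2\sum_g|a_g|^2\le\pr{\sum_g|a_g|}^2$.

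The decisive step is to upgrade $\le$ to $<$. Equality above would force $|\phi(h^{-1}g)|=1$ for every pair $g\neq h$ with $a_g,a_h\neq0$. But $|\phi(k)|=1$ with $\|L_k\beta\|_2=\|\beta\|_2=1$ is the equality case of Cauchy--Schwarz, giving $L_k\beta=c\beta$ with $|c|=1$, hence $\langle L_{k^n}\beta,\beta\rangle=c^n$ for all $n$. For $k\neq e$ in a torsion-free group $k$ has infinite order, so the finitely supported $\beta$ satisfies $k^n\supp{\beta}\cap\supp{\beta}=\varnothing$ for all large $n$, whence $\langle L_{k^n}\beta,\beta\rangle=0$, contradicting $|c^n|=1$. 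Thus $|\phi(k)|<1$ strictly for every $k\neq e$. Since a genuine zero divisor has $\lvert\supp{\alpha}\rvert\ge2$ (a one-term element $a_g g$ is a unit), at least one cross term with $g\neq h$ and $a_g,a_h\neq0$ survives, making the estimate strict and yielding $2\sum_g|a_g|^2<\pr{\sum_g|a_g|}^2$.

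I expect the delicate part to be the strictness rather than the norm identity. One must confirm that an off-diagonal pair with nonzero coefficients actually occurs and that a single strict inequality $|\phi(k)|<1$ is not washed out by cancellation in the weighted sum; the torsion-free hypothesis is precisely the hinge that excludes the boundary case $|\phi(k)|=1$, which an element such as $1+k$ with $k^2=e$ realizes exactly.
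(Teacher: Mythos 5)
The statement you were asked to prove is Conjecture \ref{ZDC} itself --- the full Kaplansky zero divisor conjecture, that \emph{every} element of $\bC G$ is regular when $G$ is torsion-free. This is a famous open problem; the paper does not prove it (it is stated as a conjecture, together with a survey of the special classes of groups for which it is known), and your argument does not prove it either. What you actually establish is the paper's Theorem \ref{Main}: a \emph{necessary condition} for being a zero divisor, namely $2\|\alpha\|_2^2<\|\alpha\|_1^2$. Plenty of elements satisfy this condition (e.g.\ $\alpha=1+x+y+z$ has $2\|\alpha\|_2^2=8<16=\|\alpha\|_1^2$), so your inequality says nothing about them and cannot rule out all zero divisors. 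That is the genuine gap: you have proved a correct theorem, but not the stated one.

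Read as a proof of Theorem \ref{Main}, your argument is correct and takes a genuinely different, more self-contained route than the paper's. You evaluate $0=\valu{\alpha^*\alpha\beta}{\beta}=\sum_{g,h}a_g\bar a_h\phi(h^{-1}g)$ with $\phi(k)=\valu{k\beta}{\beta}$ a normalized positive-definite function, split off the diagonal, and upgrade $|\phi(k)|\le1$ to $|\phi(k)|<1$ for $k\neq1$ via the equality case of Cauchy--Schwarz: $k\beta=c\beta$ is impossible for finitely supported $\beta$ when $k$ has infinite order, since $k^n\supp{\beta}\cap\supp{\beta}=\varnothing$ for large $n$. (One remark: the strictness lives in the termwise bound $|a_g||a_h|\,|\phi(h^{-1}g)|<|a_g||a_h|$ applied \emph{before} any summation, so your worry about cancellation is unfounded; you do need, and correctly note, that a zero divisor has at least two elements in its support.) The paper instead introduces the cone of ``golden'' self-adjoint elements, decomposes $\alpha^*\alpha$ as $\Upsilon(\alpha^*\alpha)\cdot1$ plus a positive combination of terms $\pr{\bar a_g/|a_g|+g}^*\pr{a_g/|a_g|+g}$, and reduces to the regularity of $a+bg$, which it obtains from Lemma \ref{11+g} together with an Elek-type transfer theorem for amenable groups. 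Your route avoids all of that machinery; its only limitation is that, as written, the strictness step uses the finite support of $\beta$, whereas the paper's chain of results also excludes $\ell^2$ annihilators. Both arguments ultimately rest on the same fact, namely that translation by an infinite-order element has no eigenvectors.
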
 
Amazingly, this conjecture has held up for many years. The conjecture \ref{ZDC} has been proven affirmative when $G$ belongs to special classes of groups; ordered groups (\cite{malcev} and \cite{neumann}), supersolvable groups (\cite{formanek}), polycyclic-by-finite groups (\cite{brown} and \cite{farsni}) and uniqe product groups (\cite{cohen-zd}). Delzant \cite{140} deals with group rings of word-hyperbolic groups and proves the conjecture for certain word-hyperbolic groups. Let $\mathcal C$ (Linnell's class of groups) be the smallest class of groups which contains all free groups and is closed under directed unions and extensions with elementary amenable quotients. Let $G$ be a group in $\mathcal C$ such that there is an upper bound on the orders of finite subgroups, then $G$ satisfies the above conjecture (\cite{lin309}).

The map $\valu\cdot\cdot: \bC G \times \bC G \rightarrow \mathbb{R}$ defined by  
\begin{align*}
\valu\alpha\beta:=\sum_{g\in G}a_g\bar{b}_g\quad(\alpha,\beta\in\bC G)
\end{align*}
is an inner product on $\bC G$, so $\|\alpha\|_2=\valu\alpha\alpha^\frac12$ becomes a norm, called 2-norm; the completion of $\bC G$ w.r.t. 2-norm is the Hilbert space $\ell^2(G)$. Indeed, we have 
\[ \ell^2(G)=\se{\alpha:\ra G\bC:\sum_{g\in G}\|\alpha(g)\|^2<\infty}.\]
In \cite{lin}, Linnell formulated an analytic version of the zero divisor conjecture.
\begin{conj}\label{AZDC}
Let $G$ be a torsion-free group. If $0\neq\alpha\in\bC G$ and $0\neq\beta\in\ell^2(G)$,
then $\alpha\beta\neq0$.
\end{conj}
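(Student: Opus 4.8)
The plan is to recast the statement operator-algebraically inside the group von Neumann algebra $\vN{G}$, the weak closure of $\bC G$ acting on $\ell^2(G)$ by the left regular representation $\lambda$. First I would observe that left convolution by $\alpha=\sum_g a_gg$ is the bounded operator $L_\alpha=\sum_g a_g\lambda_g\in\vN{G}$, and that for $\beta\in\ell^2(G)$ one has $\alpha\beta=L_\alpha\beta$; hence the existence of a nonzero $\beta\in\ell^2(G)$ with $\alpha\beta=0$ is exactly the assertion $\ker L_\alpha\neq\se{0}$. Setting $\alpha^*=\sum_g\bar a_gg^{-1}$, so that $L_\alpha^*=L_{\alpha^*}$, the positive self-adjoint operator $T:=L_{\alpha^*}L_\alpha=L_{\alpha^*\alpha}\in\vN{G}$ satisfies $\ker T=\ker L_\alpha$, because $\valu{T\beta}{\beta}=\|L_\alpha\beta\|_2^2$. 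Thus it suffices to prove that $T$ is injective whenever $\alpha\neq0$.

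I would then study $T$ through its spectral projection at $0$. Let $P:=\chi_{\se{0}}(T)$ be the orthogonal projection onto $\ker T$; since spectral projections of a self-adjoint element of $\vN{G}$ again lie in $\vN{G}$, we have $P\in\vN{G}$. The algebra $\vN{G}$ carries its canonical faithful normal trace $\tau$, given by $\tau(x)=\valu{xe}{e}$ where $e$ is the identity of $G$ viewed as a unit vector of $\ell^2(G)$; faithfulness yields the crucial chain $\ker L_\alpha\neq\se{0}\iff P\neq0\iff\tau(P)>0$. The whole problem is therefore reduced to one numerical statement: for every nonzero $\alpha\in\bC G$ the von Neumann dimension $\tau(P)$ of $\ker L_\alpha$ vanishes. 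To finish I would invoke an integrality principle: for a torsion-free group the strong Atiyah-type prediction is that $\tau(P)$, being the kernel dimension of an element of $\bC G$, is an integer, and since $0\le\tau(P)\le\tau(\id)=1$ this forces $\tau(P)\in\se{0,1}$. As $\tau(P)=1$ would give $P=\id$, hence $T=0$ and $\alpha=0$, nonzero $\alpha$ leaves only $\tau(P)=0$, which is exactly the conclusion.

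The hard part is precisely this integrality of $\tau(P)$. Everything up to it is formal and valid for every torsion-free group, but the claim that a kernel dimension is an integer is the content of the (strong) Atiyah conjecture, which is known only for restricted families---ordered, supersolvable, polycyclic-by-finite, unique-product groups, and more generally Linnell's class with a bound on the orders of finite subgroups---and is open in general. I would try to secure it either by a L\"uck-type approximation of $\tau(P)$ through normalized traces on finite quotients or by induction over the subgroup structure; but I do not expect a uniform proof along this route. The realistic unconditional output is to estimate $\tau(P)$ rather than to prove it zero: using the moment identities $\tau(T^n)=\valu{T^ne}{e}$, the value $\tau(T)=\|\alpha\|_2^2$, and positivity of the spectral distribution of $T$, one can extract constraints that any genuine analytic zero divisor ($\tau(P)>0$) must satisfy, producing a necessary condition on the coefficients $(a_g)$ rather than the full statement.
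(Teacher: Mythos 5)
You were asked to prove a statement that the paper itself does not prove: Conjecture \ref{AZDC} is Linnell's analytic zero divisor conjecture, stated in the paper as an open conjecture, so there is no proof of the paper to compare against. Your reduction is correct and is the standard operator-algebraic framing: identifying $\alpha\beta$ with $L_\alpha\beta$, passing to $T=L_{\alpha^*\alpha}$ with $\ker T=\ker L_\alpha$ via $\valu{T\beta}{\beta}=\|L_\alpha\beta\|_2^2$, taking the spectral projection $P=\chi_{\se{0}}(T)\in\vN{G}$, and using faithfulness of the canonical trace $\tau$ to translate the conjecture into $\tau(P)=0$ for every nonzero $\alpha$. But the decisive step, integrality of $\tau(P)$ for torsion-free $G$, is exactly the strong Atiyah conjecture, open in general, and you correctly concede this: neither L\"uck-type approximation (which needs residual finiteness or sofic-type hypotheses, and even then yields approximation statements rather than integrality without further algebraic input on the coefficients) nor induction over subgroups is known to close it. So the proposal is, by your own honest assessment, a conditional reduction rather than a proof --- which is the right conclusion, since the statement is open.

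It is worth noting how your fallback suggestion relates to what the paper actually accomplishes. You propose extracting from positivity and the moments $\tau(T^n)$ a necessary condition on the coefficients $(a_g)$ of a genuine analytic zero divisor; this is in spirit precisely the paper's Theorem \ref{Main}, which proves $\alpha$ is regular whenever $2\|\alpha\|_2^2\geq\|\alpha\|_1^2$, but the paper gets there by an elementary route that avoids spectral calculus and traces entirely. It first shows (Lemma \ref{11+g}, Theorem \ref{generalize}, Proposition \ref{1+g}) that no nonzero $\gamma\in\ell^2(G)$ satisfies $(a+bg)\gamma=0$ for $g$ non-torsion; it then shows (Proposition \ref{sumofnonzerdivisor}, which is your positivity observation in disguise) that if a finite sum $\sum_{\alpha\in\CF}\alpha^*\alpha$ is an analytical zero divisor then so is every $\alpha\in\CF$; and finally it decomposes any self-adjoint $\alpha$ with $\Upsilon(\alpha)\geq0$ as the nonnegative combination $\Upsilon(\alpha)+\frac12\sum_{g\neq1}|a_g|\bigl(\frac{\bar a_g}{|a_g|}+g\bigr)^*\bigl(\frac{a_g}{|a_g|}+g\bigr)$, the \emph{golden cone} of Proposition \ref{cone}, and applies this to $\alpha^*\alpha$. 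If you want an unconditional result along your lines, that explicit decomposition is the missing idea; your trace-moment machinery could at best reproduce such an inequality, not Conjecture \ref{AZDC} itself.
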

In \cite{lingva}, it is shown that 
Since $\bC G\subset \ell^2(G)$, the second conjecture implies the first one. In \cite{elek}, it is proved that for finitely generated amenable groups, the two conjectures
are actually equivalent. We prove this is true for all amenable torsion-free groups. \par 
The so-called 1-norm is defined on $\bC G$ by 
\[ \|\alpha\|_1=\sum_{g\in G}|a_g|,\quad\text{for $\alpha=\sum_{g\in G}a_gg$ in $\bC G$}.\]

The \textit{adjoint} of an element $\alpha=\sum_{g\in G}a_gg$ in $\bC G$, denoted by $\alpha^*$, is $\alpha^*=\sum_{g\in G}\bar a_gg^{-1}$. We call an element $\alpha\in\bC G$ \textit{self-adjoint} if $\alpha^*=\alpha$, and use $(\bC G)_{\textbf{s}}$ to denote the set of self-adjoint elements of $\bC G$. It is worthy of mention that if $\alpha=\sum_{g\in G}a_gg$ is self-adjoint then $a_1$ should be a real number. 
For $\alpha\in\bC G$, $\beta$ and $\gamma$ in $\ell^2(G)$, the following equalities hold: 
\[ \valu{\alpha\beta}\gamma=\valu{\beta}{\alpha^*\gamma}.\]
\par 
 The goal of this paper is to give a criterion for an element in a complex group algebra to be regular:
\begin{theorem}
\label{Main} Let $G$ be a torsion free group. Then $\alpha\in\bC G$ is regular if $2\|\alpha\|_2^2\geq\|\alpha\|_1^2$.
\end{theorem}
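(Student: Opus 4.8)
The plan is to argue by contraposition: assuming $\alpha$ is not regular, so that $\alpha\beta=0$ for some nonzero $\beta\in\bC G$, I would derive the strict inequality $2\|\alpha\|_2^2<\|\alpha\|_1^2$, which contradicts the hypothesis. The whole argument takes place inside $\ell^2(G)$, where I view $\alpha$ as the left-multiplication operator and exploit the positive element $\alpha^*\alpha$. The first move is to normalize $\|\beta\|_2=1$ and to record that $\alpha\beta=0$ forces $\valu{\alpha^*\alpha\beta}{\beta}=\valu{\alpha\beta}{\alpha\beta}=0$, using the adjoint identity stated in the excerpt.

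Next I would split $\alpha^*\alpha$ according to its identity coefficient. A direct expansion gives $\alpha^*\alpha=\sum_{g,h}\bar a_g a_h\,g^{-1}h$, whose coefficient at the identity is exactly $\sum_g|a_g|^2=\|\alpha\|_2^2=:s$. Writing $\alpha^*\alpha=s\cdot 1+\gamma'$, the self-adjoint remainder $\gamma'=\sum_{k\neq1}c_k k$ has $c_k=\sum_g\bar a_g a_{gk}$ and no identity term. The vanishing inner product then reads $s+\valu{\gamma'\beta}{\beta}=0$, i.e. $\valu{\gamma'\beta}{\beta}=-s$. In parallel I would bound the off-diagonal $1$-norm by a rearrangement: $\|\gamma'\|_1=\sum_{k\neq1}|c_k|\le\sum_{k\neq1}\sum_g|a_g|\,|a_{gk}|=\sum_g|a_g|\pr{\|\alpha\|_1-|a_g|}=\|\alpha\|_1^2-\|\alpha\|_2^2$.

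The heart of the matter — and the only place torsion-freeness enters — is the strict autocorrelation bound $|\valu{k\beta}{\beta}|<\|\beta\|_2^2$ for every $k\neq1$. By Cauchy–Schwarz the left side is at most $\|\beta\|_2^2$, with equality only if $k\beta=\lambda\beta$ for a unimodular $\lambda$; but then $\supp{\beta}$ would be invariant under left multiplication by the infinite-order element $k$, forcing the finite support to be empty, a contradiction. Granting this, and assuming $\gamma'\neq0$ (if $\gamma'=0$ then $\valu{\alpha^*\alpha\beta}{\beta}=s>0$, already contradicting $\alpha\beta=0$), I would estimate $s=|\valu{\gamma'\beta}{\beta}|\le\sum_{k\neq1}|c_k|\,|\valu{k\beta}{\beta}|<\sum_{k\neq1}|c_k|=\|\gamma'\|_1\le\|\alpha\|_1^2-\|\alpha\|_2^2$, where the strict step uses that some $c_k\neq0$. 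This yields $2\|\alpha\|_2^2<\|\alpha\|_1^2$, the desired contradiction. I expect the main obstacle to be the equality case $2\|\alpha\|_2^2=\|\alpha\|_1^2$: there every non-strict estimate must be upgraded to a strict one, which is precisely what the torsion-free autocorrelation bound delivers, and it is genuinely indispensable — for a torsion element $g$ of order $n$ the zero divisor $\alpha=1-g$ saturates $2\|\alpha\|_2^2=\|\alpha\|_1^2$, so no torsion-free-free argument can succeed.
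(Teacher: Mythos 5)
Your proof is correct, and it reaches the conclusion by a more direct route than the paper, although the two arguments share their quantitative core: your computation that the identity coefficient of $\alpha^*\alpha$ equals $\|\alpha\|_2^2$ while the off-identity part $\gamma'$ satisfies $\|\gamma'\|_1\le\|\alpha\|_1^2-\|\alpha\|_2^2$ is precisely the paper's inequality $\Upsilon(\alpha^*\alpha)\ge2\|\alpha\|_2^2-\|\alpha\|_1^2$. The difference lies in how torsion-freeness is brought to bear. The paper defines the cone of golden self-adjoint elements, writes such an element as $\Upsilon(\alpha)\cdot1$ plus a positive combination of terms $\pr{c+g}^*\pr{c+g}$ with $|c|=1$ (Proposition \ref{cone}), and then invokes the regularity of the binomials $c+g$, obtained from the leading-coefficient argument of Lemma \ref{11+g} together with the amenability machinery of Theorem \ref{generalize} and Proposition \ref{1+g}. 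You bypass the cone and the binomials entirely, testing $\valu{\alpha^*\alpha\beta}{\beta}=0$ directly against the strict autocorrelation bound $|\valu{k\beta}{\beta}|<\|\beta\|_2^2$ for $k\ne1$, which you prove via the Cauchy--Schwarz equality case plus the observation that a nonempty finite support cannot be invariant under translation by an infinite-order element. The two key inputs are in fact equivalent ($|\valu{k\beta}{\beta}|=\|\beta\|_2^2$ exactly when $(c+k)\beta=0$ for some unimodular $c$), and you rightly emphasize that this strictness is what settles the boundary case $2\|\alpha\|_2^2=\|\alpha\|_1^2$. Your version is more self-contained for the statement actually asserted in Theorem \ref{Main} (zero divisors with $\beta\in\bC G$): it needs neither Lemma \ref{iff} nor any amenability input. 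The paper's longer route buys the stronger analytic conclusion that golden elements, and hence such $\alpha$, are not analytical zero divisors; your argument extends to $\beta\in\ell^2(G)$ as well, but only after upgrading the orbit step (an $\ell^2$ function cannot have constant nonzero modulus on an infinite orbit). The one caveat, shared with the paper, is that $\alpha=0$ must be tacitly excluded, since you use $s>0$ in the case $\gamma'=0$.
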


\section{Preliminaries}
In this section we provide some preliminaries needed in the following. 
\par Let $G$ be a group. The \textit{support} of an element $\alpha=\sum_{g\in G}a_gg$ in $\bC G$, $\supp\alpha$, is the finite subset $\se{g\in G: a_g\neq0}$ of $G$.\par  Let $H$ be a subgroup of $G$, and $T$ be a right transversal for $H$ in $G$. Then every element 
$\alpha\in\bC G$ (resp. $\alpha\in\ell^2(G)$) can be written uniquely as a finite sum of the form $\sum_{t\in T}\alpha_tt$ with $\alpha_t\in\bC H$ (resp. $\alpha_t\in\ell^2(H)$). \par 
 For $S\subset G$, we denote by $\langle S\rangle$, the subgroup of $G$ generated by $S$. We have the following key lemma: 
\begin{lemma}\label{iff} 
Let $G$ be a group, $\alpha\in\bC G$ and $H=\langle\supp\alpha\rangle$. Then $\alpha$ is regular in $\bC G$ iff $\alpha$ is regular in $\bC H$.
\end{lemma}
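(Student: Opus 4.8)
The plan is to prove the two implications separately, with the forward direction essentially immediate and the reverse direction relying on the right-transversal decomposition recalled just above the statement.

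First I would dispatch the easy implication. Since $H=\langle\supp{\alpha}\rangle$ is a subgroup of $G$, we have $\bC H\subseteq\bC G$. If $\alpha$ is regular in $\bC G$, then $\alpha\beta\neq0$ for every nonzero $\beta\in\bC G$, and in particular for every nonzero $\beta\in\bC H$; hence $\alpha$ is regular in $\bC H$. No computation is needed here. The content of the lemma is therefore the converse, which I would argue contrapositively: assuming $\alpha$ is a zero divisor in $\bC G$, I would manufacture a zero-divisor witness already living in $\bC H$.

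To do this, fix a right transversal $T$ for $H$ in $G$, so that $G$ is the disjoint union of the cosets $Ht$ with $t\in T$, and every element of $\bC G$ has a unique expression $\sum_{t\in T}\beta_t t$ with $\beta_t\in\bC H$ and all but finitely many $\beta_t$ equal to zero. The crucial observation is that, because $\supp{\alpha}\subseteq H$, left multiplication by $\alpha$ is ``block diagonal'' with respect to this decomposition: one has $H\cdot Ht=Ht$, so $\alpha\cdot(\bC H\,t)\subseteq\bC H\,t$. Concretely, writing a nonzero $\beta$ with $\alpha\beta=0$ as $\beta=\sum_{t\in T}\beta_t t$, I obtain $\alpha\beta=\sum_{t\in T}(\alpha\beta_t)t$, and since each $\alpha\beta_t\in\bC H$ this is precisely the unique transversal expansion of $\alpha\beta$.

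Now I would exploit uniqueness. Since $\alpha\beta=0$, its transversal expansion is trivial, whence $\alpha\beta_t=0$ for every $t\in T$. As $\beta\neq0$, some coordinate $\beta_{t_0}$ is nonzero, and then $\beta_{t_0}\in\bC H$ is a nonzero element with $\alpha\beta_{t_0}=0$, exhibiting $\alpha$ as a zero divisor in $\bC H$. Taking contrapositives gives that regularity in $\bC H$ forces regularity in $\bC G$, which completes the equivalence. The only genuinely substantive point is the block-diagonal observation $\alpha\cdot(\bC H\,t)\subseteq\bC H\,t$ that uses $\supp{\alpha}\subseteq H$; it is exactly what lets me descend from the single equation $\alpha\beta=0$ in $\bC G$ to a nonzero solution inside $\bC H$, and everything else is bookkeeping with the uniqueness of the transversal expansion.
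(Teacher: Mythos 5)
Your proof is correct, and it is a complete, self-contained argument; but it is packaged differently from the paper's. The paper argues by a minimality selection: among all nonzero $\gamma\in\bC G$ with $\alpha\gamma=0$ it picks a $\beta$ with $1\in\supp\beta$ and $|\supp\beta|$ minimal, and asserts (leaving the verification to the reader) that such a $\beta$ must already lie in $\bC H$. You instead take an arbitrary nonzero annihilator $\beta$, expand it along a right transversal $T$ for $H$ as $\beta=\sum_{t\in T}\beta_t t$ with $\beta_t\in\bC H$, observe that left multiplication by $\alpha$ is block diagonal for this decomposition because $\supp\alpha\subseteq H$, and conclude from uniqueness of the expansion that every component satisfies $\alpha\beta_t=0$, so any nonzero component is the desired witness in $\bC H$. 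The underlying mechanism is the same --- the paper's ``one can easily show'' is most naturally discharged by exactly your block-diagonality observation, and indeed the paper recalls the transversal decomposition in the paragraph preceding the lemma --- but your route dispenses with the minimality device altogether and makes the key step explicit rather than implicit. What the paper's phrasing buys is brevity; what yours buys is that every claim is actually verified, and in addition you get the slightly stronger conclusion that \emph{every} nonzero $H$-component of \emph{every} annihilator of $\alpha$ is itself an annihilator, not just one carefully chosen element. Your handling of the easy direction (restriction of regularity from $\bC G$ to the subalgebra $\bC H$) is also fine; the paper omits it entirely.
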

\begin{proof}
Suppose that $\alpha$ is a zero divisor. Among elements $0\neq\gamma$ in $\bC G$ which satisfy $\alpha\gamma=0$ consider an element $\beta$ such that $1\in\supp\beta$ and $|\supp\beta|$ is minimal, then one can easily show that $\beta\in\bC H$, and this proves the result of the lemma.
\end{proof}
An immediate consequence of this lemma is:
\begin{corollary}
A group $G$ satisfies the Conjecture \ref{ZDC} iff all its finitely generated subgroups satisfy the Conjecture \ref{ZDC}. 
\end{corollary}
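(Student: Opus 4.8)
The plan is to read this off directly from Lemma \ref{iff}, whose entire content is that regularity of an element of a group algebra depends only on the subgroup generated by its (necessarily finite) support. The crucial structural observation I would use is that for any $\alpha\in\bC G$ the set $\supp\alpha$ is finite, so $H=\langle\supp\alpha\rangle$ is always a \emph{finitely generated} subgroup of $G$; this is exactly what allows finitely generated subgroups to govern the behaviour of arbitrary elements. I would also remark at the outset that a subgroup of a torsion-free group is again torsion-free, so the hypothesis of Conjecture \ref{ZDC} is inherited in passing to subgroups and no compatibility issue arises; throughout, I read \enquote{$G$ satisfies Conjecture \ref{ZDC}} as \enquote{every nonzero element of $\bC G$ is regular}.

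For the reverse implication, I would assume every finitely generated subgroup of $G$ satisfies the conjecture and take an arbitrary $0\neq\alpha\in\bC G$. Setting $H=\langle\supp\alpha\rangle$, which is finitely generated, the hypothesis gives that $\bC H$ has no zero divisors, so $\alpha$ is regular in $\bC H$; Lemma \ref{iff} then upgrades this to regularity of $\alpha$ in $\bC G$. Since $\alpha$ was arbitrary, $G$ satisfies the conjecture.

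For the forward implication, I would assume $G$ satisfies the conjecture, fix a finitely generated subgroup $H$, and take $0\neq\alpha\in\bC H$. The idea is to bridge regularity in $\bC H$ and in $\bC G$ through the common subgroup $K=\langle\supp\alpha\rangle$, noting $K\leq H\leq G$ and that $\alpha$ has the same support in each algebra. Applying Lemma \ref{iff} inside $G$ shows $\alpha$ is regular in $\bC G$ iff it is regular in $\bC K$, while applying the same lemma inside $H$ shows $\alpha$ is regular in $\bC H$ iff it is regular in $\bC K$. As $G$ satisfies the conjecture, $\alpha$ is regular in $\bC G$, hence in $\bC K$, hence in $\bC H$, so $H$ satisfies the conjecture.

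I do not expect a genuine obstacle, since all the real work has been absorbed into Lemma \ref{iff}. The only point requiring care is this \enquote{double application} of the lemma in the forward direction: one must use $K=\langle\supp\alpha\rangle$ as an intermediary to transport regularity from the large algebra $\bC G$ down to the finitely generated algebra $\bC H$, rather than attempting to compare $\bC H$ and $\bC G$ directly.
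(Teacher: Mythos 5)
Your proof is correct and matches the paper's approach: the paper offers no written proof, calling the corollary an immediate consequence of Lemma \ref{iff}, and your argument is exactly the intended one, hinging on the finiteness of $\supp\alpha$ so that $\langle\supp\alpha\rangle$ is finitely generated. The only simplification worth noting is that your forward direction does not need the double application of Lemma \ref{iff}: a zero divisor in $\bC H$ is automatically a zero divisor in $\bC G$, since any nonzero $\beta\in\bC H$ with $\alpha\beta=0$ is also a nonzero element of $\bC G$.
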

By Lemma \ref{iff} in hand, we can generalize the main theorem of \cite{elek}:
\begin{theorem}\label{generalize}
Let $G$ be an amenable group. If $0\neq\alpha\in\bC G$, $0\neq\beta\in\ell^2(G)$ and $\alpha\beta=0$, then there exists $0\neq\gamma\in\bC G$ such that $\alpha\gamma=0$.
\end{theorem}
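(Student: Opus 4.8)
The plan is to reduce from the amenable group $G$ to the finitely generated amenable subgroup $H=\langle\supp\alpha\rangle$ and then to invoke the result of \cite{elek} for finitely generated groups. First I would note that since $\supp\alpha$ is finite, $H$ is finitely generated, and being a subgroup of an amenable group it is itself amenable; moreover $\alpha\in\bC H$ by construction. The task then splits into two parts: manufacturing an analytic zero divisor for $\alpha$ that actually lives in $\ell^2(H)$, and converting the resulting algebraic zero divisor in $\bC H$ back into one in $\bC G$.

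The key step is to transfer $\beta\in\ell^2(G)$ down to $\ell^2(H)$ via the coset decomposition recalled in the preliminaries. Fixing a right transversal $T$ for $H$ in $G$, I would write $\beta=\sum_{t\in T}\beta_t t$ with each $\beta_t\in\ell^2(H)$, an orthogonal decomposition of $\ell^2(G)$ indexed by the cosets $Ht$. Since $\alpha\in\bC H$, left multiplication respects the blocks: $\alpha\beta=\sum_{t\in T}(\alpha\beta_t)t$ with every $\alpha\beta_t\in\ell^2(H)$, and the summands lie in the pairwise disjoint cosets $Ht$. Hence $\alpha\beta=0$ forces $\alpha\beta_t=0$ for every $t\in T$. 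As $\beta\neq0$, there is some $t_0\in T$ with $\beta_{t_0}\neq0$, so $\beta_{t_0}$ is a nonzero element of $\ell^2(H)$ annihilated by $\alpha$; that is, $\alpha$ is an analytic zero divisor inside the finitely generated amenable group $H$.

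Now the main theorem of \cite{elek} applies to $H$ and yields a nonzero $\gamma\in\bC H$ with $\alpha\gamma=0$. Since $\bC H\subseteq\bC G$ — a fact also reflected in Lemma \ref{iff}, which guarantees that passing to $H$ loses no information about regularity — this $\gamma$ is the desired nonzero element of $\bC G$ with $\alpha\gamma=0$, completing the argument. I expect the only real obstacle to be the careful justification of the orthogonal block decomposition for genuine $\ell^2$-elements (rather than the finite sums available in $\bC G$) together with the verification that $\alpha\beta=0$ splits into the separate conditions $\alpha\beta_t=0$; once this is secured, the reduction to the finitely generated case and the appeal to \cite{elek} are essentially formal.
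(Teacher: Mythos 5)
Your proposal is correct and follows the same route the paper intends (the paper gives no written proof, only the remark that Lemma \ref{iff} together with the main theorem of \cite{elek} yields the result): reduce to the finitely generated amenable subgroup $H=\langle\supp\alpha\rangle$, use the coset decomposition $\beta=\sum_{t\in T}\beta_t t$ to extract a nonzero $\beta_{t_0}\in\ell^2(H)$ killed by $\alpha$, and apply Elek's theorem in $\bC H\subseteq\bC G$. Your explicit justification of the orthogonal block decomposition for $\ell^2$-elements is exactly the detail the paper leaves implicit.
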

 
 The above theorem along with results in \cite{KLM} provides another proof for \cite[Theorem 2]{lingva}.\par 
 For a normal subgroup $N$ of a group $G$, we denote the natural quotient map by $q_N:\ra G{G/N}$. We continue to show that:
\begin{lemma}\label{11+g}
Let $N$ be a normal subgroup of a group $G$ satisfying Conjecture \ref{ZDC}. Consider a non-torsion element $q_N(t)$, $t\in G$, in the quotient group. Then $\alpha+\beta t$ is regular, for all $\alpha,\beta\in\bC N\setminus\se{0}$.
\end{lemma}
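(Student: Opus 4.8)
The plan is to reduce to the subgroup $H=\langle N,t\rangle$ and to exploit that $H/N$ is infinite cyclic. First I would invoke Lemma~\ref{iff}: since $\supp{\alpha+\beta t}\subseteq N\cup Nt\subseteq H$, the element $\alpha+\beta t$ is regular in $\bC G$ as soon as it is regular in $\bC H$. More precisely, the minimal-support argument in the proof of Lemma~\ref{iff} shows that if $\alpha+\beta t$ were a zero divisor we could choose a witness $0\neq\gamma$ of $(\alpha+\beta t)\gamma=0$ already lying in $\bC\langle\supp{\alpha+\beta t}\rangle\subseteq\bC H$. So it suffices to rule out a witness $\gamma\in\bC H$.

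Next I would set up coordinates. Because $q_N(t)$ has infinite order, the cosets $Nt^n$, $n\in\mathbb{Z}$, are pairwise distinct and exhaust $H$, so $\se{t^n:n\in\mathbb{Z}}$ is a right transversal for $N$ in $H$. Hence every $\gamma\in\bC H$ has a unique finite expansion $\gamma=\sum_{n\in\mathbb{Z}}\delta_n t^n$ with $\delta_n\in\bC N$. Writing $\sigma$ for the automorphism of $\bC N$ induced by conjugation by $t$ (well defined since $N\trianglelefteq G$), so that $t c=\sigma(c)\,t$ for $c\in\bC N$, I would compute
\[ (\alpha+\beta t)\gamma=\sum_{n\in\mathbb{Z}}\bigl(\alpha\delta_n+\beta\,\sigma(\delta_{n-1})\bigr)t^n. \]
By uniqueness of the transversal expansion, $(\alpha+\beta t)\gamma=0$ forces $\alpha\delta_n+\beta\,\sigma(\delta_{n-1})=0$ for every $n\in\mathbb{Z}$.

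The key input is that $N$ satisfies Conjecture~\ref{ZDC}, which means no nonzero element of $\bC N$ is a zero divisor, i.e.\ $\bC N$ has no zero divisors. Assuming $\gamma\neq0$, let $m$ be the least index with $\delta_m\neq0$. Taking $n=m$ in the recursion and using $\delta_{m-1}=0$ gives $\alpha\delta_m=0$ in $\bC N$. Since $\alpha\neq0$ and $\bC N$ has no zero divisors, this forces $\delta_m=0$, a contradiction. Therefore no such $\gamma$ exists and $\alpha+\beta t$ is regular.

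I expect the only real subtlety to be the reduction step: one must be sure that a hypothetical witness in $\bC G$ can be taken inside $\bC H$, so that the clean transversal $\se{t^n}$ becomes available, and this is exactly what the minimal-support argument of Lemma~\ref{iff} supplies. Once the problem is transported to $\bC H$, the \enquote{lowest-degree coefficient} argument is forced by $\bC N$ having no zero divisors; notably it uses only $\alpha\neq0$, with the symmetric role of $\beta$ handled by reading the recursion from the top index down.
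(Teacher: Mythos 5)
Your proof is correct and follows essentially the same route as the paper: reduce to $H=\langle N,t\rangle$ via Lemma~\ref{iff}, expand the hypothetical witness along the transversal $\se{t^n}$, and kill an extreme coefficient using that $\bC N$ has no zero divisors. The only (immaterial) difference is that you read off the lowest-index coefficient, which uses $\alpha\neq0$, while the paper reads off the highest one, which uses $\beta\neq0$.
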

\begin{proof}
Suppose that $\alpha+\beta t$ is a zero divisor for non zero elements $\alpha,\beta\in\bC N$. Applying Lemma \ref{iff} and multiplying by a suitable power of $t$, we can assume that there are non zero elements $\gamma_k$, $k=0,1,\dotsc, n$, such that 
\[ (\alpha+\beta t)\sum_{k=0}^n\gamma_kt^k=0.\]
In particular, $0=\beta t\gamma t^n=(\beta t\gamma_nt^{-1})t^{n+1}$, whence $\beta t\gamma_nt^{-1}=0$, a contradiction, because $t\gamma_n t^{-1}$ is a non zero element of $\bC N$.  
\end{proof}

\begin{proposition}\label{1+g}
Let $N$ be an amenable normal subgroup of a group $G$ satisfying Conjecture \ref{ZDC}. Consider a non-torsion element $q_N(t)$, $t\in G$, in the quotient group. Then there is no $0\neq\gamma\in\ell^2(G)$ such that $(\alpha+\beta t)\gamma=0$. In particular, $a+bg$ is an analytical zero divisor, for all non-torsion element $g\in G$ and non zero complex numbers $a,b$.
\end{proposition}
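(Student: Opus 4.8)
The plan is to transport the problem to the subgroup $M=\langle N,t\rangle$, which---unlike $G$---is amenable, and there to play the algebraic regularity of Lemma \ref{11+g} against the analytic-to-algebraic transfer of Theorem \ref{generalize}. Put $\eta=\alpha+\beta t$ and suppose, toward a contradiction, that $0\neq\gamma\in\ell^2(G)$ satisfies $\eta\gamma=0$. Since $N$ is normal in $G$ it is normal in $M$, and $M/N=\langle q_N(t)\rangle$ is infinite cyclic because $q_N(t)$ is non-torsion; hence $M$ is an extension of the amenable group $N$ by $\mathbb Z$ and is therefore amenable. The reason this detour is forced is precisely that Theorem \ref{generalize} needs the ambient group to be amenable, a property $G$ is not assumed to have but $M$ does.

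First I would descend the annihilation to $\ell^2(M)$. Choosing representatives $\se{s_i}_{i\in I}$ for the right cosets of $M$ in $G$, one has $G=\bigsqcup_{i\in I}Ms_i$ and the orthogonal decomposition $\ell^2(G)=\bigoplus_{i\in I}\ell^2(M)s_i$. Write $\gamma=\sum_{i}\gamma_is_i$ with $\gamma_i\in\ell^2(M)$; since $\eta\in\bC M$ and $mMs_i=Ms_i$ for every $m\in M$, left multiplication by $\eta$ stabilises each block, so $\eta\gamma=\sum_i(\eta\gamma_i)s_i$. The blocks being mutually orthogonal, $\eta\gamma=0$ forces $\eta\gamma_i=0$ for all $i$; as $\gamma\neq0$ some $\gamma_{i_0}\neq0$, and right translation by $s_{i_0}$ being unitary, we obtain $0\neq\gamma_{i_0}\in\ell^2(M)$ with $\eta\gamma_{i_0}=0$.

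Now $\eta$ is a nonzero element of $\bC M$ and $M$ is amenable, so Theorem \ref{generalize} produces a nonzero $\delta\in\bC M$ with $\eta\delta=0$. As $\bC M$ is a subring of $\bC G$, this exhibits $\eta=\alpha+\beta t$ as a zero divisor in $\bC G$, contradicting Lemma \ref{11+g}, whose hypotheses coincide with those assumed here. Thus no nonzero element of $\ell^2(G)$ annihilates $\eta$, which is the asserted statement. I expect the one point needing care to be the block decomposition above: that restricting $\gamma$ to a single coset keeps it in $\ell^2$ and preserves the identity $\eta\gamma_{i_0}=0$ rests on $\eta$ being supported inside $M$, and this is exactly what makes the amenable subgroup $M$ the right place to work.

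For the final assertion I would simply specialise to $N=\se{1}$ (amenable and normal) with $t=g$, $\alpha=a$, $\beta=b$: then $q_N(g)=g$ is non-torsion and $a,b$ are nonzero elements of $\bC N=\bC$, so the case just proved shows that $a+bg$ admits no nonzero annihilator in $\ell^2(G)$, that is, $a+bg$ is not an analytical zero divisor.
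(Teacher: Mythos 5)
Your proof is correct and takes essentially the same route as the paper's own (much terser) argument: pass to the amenable subgroup $\langle N,t\rangle$, use Theorem \ref{generalize} to convert the $\ell^2$-annihilator into a group-algebra annihilator, and contradict Lemma \ref{11+g}. The coset-block decomposition you spell out is exactly the transversal decomposition recorded in the paper's preliminaries, so nothing is missing.
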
 
\begin{proof}
The group $\valu Nt$ is amenable. Hence Lemma \ref{11+g} together with Theorem \ref{generalize} yields the result.
\end{proof}

\section{A cone of regular elements}
The result of the Proposition \ref{1+g} is true if we replace $\bC$ by an arbitrary field $\mathbb F$. The field of complex numbers allows us to define inner product on the group algebra; with the help of inner product, we can construct new regular elements from the ones we have: 
\begin{proposition}
\label{sumofnonzerdivisor}
Let $G$ be a group and $\CF$ be a finite non-empty subset of $\bC G$. If $\sum_{\alpha\in\CF}\alpha^*\alpha$ is an analytical zero divisor then all  elements of  $\CF$ are analytical zero divisors. In particular, $\alpha\in \bC G$ is an analytical zero divisor if and only if $\alpha^* \alpha$ is an analytical zero divisor. 
\end{proposition}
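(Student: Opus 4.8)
The plan is to pair the defining relation of the putative zero divisor $\eta:=\sum_{\alpha\in\CF}\alpha^*\alpha$ with a judiciously chosen test vector and then read off a positivity statement. Suppose $\eta$ is an analytical zero divisor, so that there is some $0\neq\beta\in\ell^2(G)$ with $\eta\beta=0$. Since each $\alpha\in\CF$ lies in $\bC G$, left multiplication by $\alpha$ (and by $\alpha^*$) acts as a bounded operator on $\ell^2(G)$ of norm at most $\|\alpha\|_1$, so every product appearing below genuinely lies in $\ell^2(G)$ and the inner products are finite. Because $\CF$ is finite we may expand $\eta\beta=\sum_{\alpha\in\CF}\alpha^*\alpha\beta$, and taking the inner product with $\beta$ and using linearity of $\valu\cdot\cdot$ in its first argument gives $0=\valu{\eta\beta}{\beta}=\sum_{\alpha\in\CF}\valu{\alpha^*\alpha\beta}{\beta}$.

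The key step is to recognize each summand as a genuine square. Applying the adjoint identity $\valu{\alpha\beta}{\gamma}=\valu{\beta}{\alpha^*\gamma}$ recorded earlier, with $\alpha^*$ playing the role of the multiplier and $(\alpha^*)^*=\alpha$, I would rewrite $\valu{\alpha^*(\alpha\beta)}{\beta}=\valu{\alpha\beta}{\alpha\beta}=\|\alpha\beta\|_2^2$. Substituting this back yields $0=\sum_{\alpha\in\CF}\|\alpha\beta\|_2^2$. Every term on the right is nonnegative, so each must vanish; that is, $\alpha\beta=0$ for every $\alpha\in\CF$. As $\beta\neq0$, this exhibits each element of $\CF$ as an analytical zero divisor, which is exactly the claim.

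For the \enquote{in particular} clause I would specialize to the singleton $\CF=\{\alpha\}$ to obtain the non-trivial implication: if $\alpha^*\alpha$ is an analytical zero divisor, then so is $\alpha$. The converse is immediate, since any $0\neq\beta$ with $\alpha\beta=0$ satisfies $\alpha^*\alpha\beta=\alpha^*(\alpha\beta)=0$, so $\alpha^*\alpha$ is an analytical zero divisor whenever $\alpha$ is.

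I do not anticipate a serious obstacle here: the whole argument rests on the single observation that the \enquote{cross term} $\valu{\alpha^*\alpha\beta}{\beta}$ collapses to $\|\alpha\beta\|_2^2$ by the adjoint relation, after which the nonnegativity of each norm does all the work. The only points deserving a word of care are the well-definedness of the products $\alpha\beta$ and $\alpha^*\alpha\beta$ in $\ell^2(G)$ for $\beta\in\ell^2(G)$, which holds because elements of $\bC G$ act as bounded convolution operators, and the legitimacy of distributing the inner product over the finite set $\CF$, which is immediate from linearity. Both are routine and require no additional hypotheses.
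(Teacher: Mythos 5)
Your proof is correct and follows essentially the same route as the paper: pair $\sum_{\alpha\in\CF}\alpha^*\alpha\beta=0$ with $\beta$, use the adjoint identity to turn each summand into $\|\alpha\beta\|_2^2$, and conclude from nonnegativity. The extra remarks on boundedness of the convolution operators and the explicit treatment of the \enquote{in particular} clause are fine additions but not a different argument.
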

\begin{proof}
Let $\tilde{\alpha}:=\sum_{\alpha\in\CF}\alpha^*\alpha$ and $\tilde{\alpha}\beta=0$ for some $\beta\in\ell^2(G)$. Then 
\[ 0=\valu{\tilde\alpha\beta}\beta=\sum_{\alpha\in\CF}\valu{\alpha^*\alpha\beta}\beta=
\sum_{\alpha\in\CF}\valu{\alpha\beta}{\alpha\beta}=\sum_{\alpha\in\CF}\|\alpha\beta\|_2^2,\]
whence $\alpha\beta=0$ for all $\alpha\in\CF$. This completes the proof.
\end{proof} 
A \textit{cone} in a vector space $\FX$ is a subset $\FK$ of $\FX$ such that $\FK+\FK\subset\FK$ and $\mathbb R_+\FK\subset\FK$. We proceed by introducing a cone of regular elements in $\bC G$. First a definition: 
\begin{definition}\label{injpos} Let $G$ be a group and $(\bC G)_{\text{s}}$ be the set of self adjoint elements $\alpha\in\bC G$, we define a function $\Upsilon:\ra{(\bC G)_{\text{s}}}{\bR}$ by 
\[ \Upsilon(\alpha):=a_1-\sum_{g\neq 1}|a_g|. \]
We call an element $\alpha\in(\bC G)_{\text{s}}$ \textbf{golden} if $\Upsilon(\alpha)\geq0$. The set of all golden elements in $(\bC G)_{\text{s}}$ is denoted by $(\bC G)_{\text{gold}}$. 
\end{definition}
What is important about golden elements is:
\begin{proposition}\label{cone}
For a torsion free group $G$, $(\bC G)_{\text{gold}}$ is a cone of regular elements.
\end{proposition}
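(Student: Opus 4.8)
The plan is to verify the two cone axioms and then establish regularity through a positivity-and-rigidity argument carried out in $\ell^2(G)$, with torsion-freeness entering only at the very last step. First I would check the cone structure, which is the routine part: for self-adjoint $\alpha,\beta$ both $\alpha+\beta$ and $r\alpha$ (with $r\geq0$) are self-adjoint, and the triangle inequality applied to the off-diagonal coefficients makes $\Upsilon$ superadditive and positively homogeneous, i.e. $\Upsilon(\alpha+\beta)\geq\Upsilon(\alpha)+\Upsilon(\beta)$ and $\Upsilon(r\alpha)=r\,\Upsilon(\alpha)$. Hence $(\bC G)_{\text{gold}}$ is closed under addition and nonnegative scaling, so it is a cone, and it remains to prove that each nonzero golden element is regular.

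For regularity I would take $0\neq\alpha\in(\bC G)_{\text{gold}}$, write $\alpha=\sum_g a_g g$ with $a_1\in\bR$ by self-adjointness, and suppose toward a contradiction that $\alpha\beta=0$ for some $0\neq\beta\in\bC G$. Since every left translation $g\colon\xi\mapsto g\xi$ is unitary on $\ell^2(G)$, Cauchy--Schwarz gives $|\valu{g\beta}\beta|\leq\|\beta\|_2^2$, and the key computation is
\[ 0=\valu{\alpha\beta}\beta=a_1\|\beta\|_2^2+\sum_{g\neq1}a_g\valu{g\beta}\beta\geq\pr{a_1-\sum_{g\neq1}|a_g|}\|\beta\|_2^2=\Upsilon(\alpha)\|\beta\|_2^2\geq0, \]
where the middle sum is real because self-adjointness makes the contributions of $g$ and $g^{-1}$ complex conjugates. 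Consequently every inequality is forced to be an equality. In particular, for each $g\in\supp\alpha$ with $g\neq1$ one must have $|\valu{g\beta}\beta|=\|\beta\|_2^2$, which is the equality case of Cauchy--Schwarz and therefore forces $g\beta=\lambda_g\beta$ for some $\lambda_g\in\bC$ with $|\lambda_g|=1$.

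The final step, where torsion-freeness is indispensable, is to rule this rigidity out. Writing $\beta=\sum_h c_h h$, the relation $g\beta=\lambda_g\beta$ reads $c_{g^{-1}h}=\lambda_g c_h$ for all $h$; as $|\lambda_g|=1$, left multiplication by $g$ maps $\supp\beta$ bijectively onto itself. Since $\supp\beta$ is finite and nonempty, the orbit of any $h_0\in\supp\beta$ under the powers of $g$ is finite, so $g^N h_0=h_0$ and hence $g^N=1$ for some $N\geq1$; torsion-freeness then yields $g=1$, contradicting $g\in\supp\alpha\setminus\se1$. The only surviving case is $\supp\alpha=\se1$, i.e. $\alpha=a_1\cdot1$ with $a_1>0$, which is trivially regular. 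I expect the main obstacle to be precisely the boundary case $\Upsilon(\alpha)=0$: when $\alpha$ is strictly golden the displayed estimate already gives $\valu{\alpha\beta}\beta>0$ for $\beta\neq0$ and excludes even analytic zero divisors at once, whereas on the boundary one must extract the eigenrelation $g\beta=\lambda_g\beta$ from the equality case of Cauchy--Schwarz and convert it, using the finiteness of $\supp\beta$ together with torsion-freeness, into the desired contradiction.
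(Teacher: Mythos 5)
Your proof is correct, but it takes a genuinely different route from the paper's for the regularity part. The paper decomposes a golden $\alpha$ algebraically as $\Upsilon(\alpha)+\frac12\sum_{g\neq1}|a_g|\gamma_g^*\gamma_g$ with $\gamma_g=\frac{a_g}{|a_g|}+g$, and then invokes two prior results: Proposition \ref{sumofnonzerdivisor} (a sum $\sum\gamma^*\gamma$ is a zero divisor only if every $\gamma$ is) and Proposition \ref{1+g} (two-term elements $a+bg$ with $g$ non-torsion are regular, itself resting on a transversal/leading-coefficient argument in Lemma \ref{11+g}). You instead work directly with the quadratic form: your inequality $\valu{\alpha\beta}\beta\geq\Upsilon(\alpha)\|\beta\|_2^2$ is exactly what one obtains by pairing the paper's decomposition against $\beta$, so the positivity is the same, but you dispatch the degenerate boundary case $\Upsilon(\alpha)=0$ by the equality case of Cauchy--Schwarz ($g\beta=\lambda_g\beta$ with $|\lambda_g|=1$) followed by a finite-orbit argument on $\supp\beta$, rather than by citing the regularity of $a+bg$. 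Your version is more self-contained and elementary --- it needs none of Lemma \ref{11+g}, Theorem \ref{generalize}, or Proposition \ref{sumofnonzerdivisor} --- and it even extends verbatim to $\beta\in\ell^2(G)$ if one replaces \enquote{finite support} by square-summability (an infinite $\<g\>$-orbit on which $|\beta|$ is a nonzero constant is not $\ell^2$). What the paper's modular route buys is reusability: the auxiliary propositions are stated for analytical zero divisors and are used elsewhere in the development, whereas your argument is a single closed computation tailored to this proposition. One cosmetic point: as you note, $\alpha=0$ is golden but is a zero divisor under the paper's definition, so the clause \enquote{each \emph{nonzero} golden element is regular} is the right reading of the statement.
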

\begin{proof}
It is obvious that if $\alpha$ is golden then so is $r\alpha$ for any $r>0$. The triangle inequality for $\bC$ shows that if $\alpha$ and $\gamma$ are golden then so is $\alpha+\gamma$. For $\alpha\in(\bC G)_{\text{s}}$, we have 
\begin{align*}\label{sumpos}
\alpha&=\frac12(\alpha+\alpha^*)\notag\\&= a_1+\frac12\sum_{g\neq1}\pr{\bar a_gg^{-1}+a_gg}\notag\\
&=\Upsilon(\alpha)+\frac12\sum_{g\neq1}\pr{2|a_g|+\bar a_gg^{-1}+a_gg}\\
&=\Upsilon(\alpha)+\frac12\sum_{g\neq1}|a_g|\pr{\frac{\bar a_g}{|a_g|}+g}^*\pr{\frac{a_g}{|a_g|}+g}
\end{align*}
Hence, by Lemma \ref{1+g} and Proposition \ref{sumofnonzerdivisor}, $\alpha$ is regular. 
\end{proof} 
Now, we are ready to prove our main result:
\begin{proof}[Proof of Theorem \ref{Main}]
For $\alpha=\sum_{g\in G}a_gg$ in $\bC G$, $\alpha^*\alpha$ is self-adjoint, and one can easily show that 
\[\Upsilon(\alpha^*\alpha)\geq2\|\alpha\|_2^2-\|\alpha\|_1^2.\] Hence, by Proposition \ref{cone}, the result of the Theorem is proved. 
\end{proof}

\bibliography{Bib}{}

\begin{thebibliography}{10}

\bibitem{brown}
K.~A. Brown.
\newblock On zero divisors in group rings.
\newblock {\em Bull. Lond. Math. Soc.}, 8:251--256, 1976.

\bibitem{cohen-zd}
J.~M. Cohen.
\newblock Zero divisors in group rings.
\newblock {\em Comm. Algebra}, 2:1--14, 1974.

\bibitem{140}
T.~Delzant.
\newblock Sur l’anneau d’un groupe hyperbolique.
\newblock {\em C. R. Acad. Sci. Paris S’er. I Math.}, 324(4):381--384, 1997.

\bibitem{elek}
Gabor Elek.
\newblock On the analytic zero divisor conjecture of {L}innell.
\newblock {\em Bull. Lond. Math. Soc.}, 35(2):236--238, 2003.

\bibitem{farsni}
D.~R. Farkas and R.~L. Snider.
\newblock $k_0$ and {N}oetherian group rings.
\newblock {\em J. Algebra}, 42:192--198, 1976.

\bibitem{formanek}
E.~Formanek.
\newblock The zero divisor question for supersolvable groups.
\newblock {\em Bull. Aust. Math. Soc.}, 73c:67--71, 1973.

\bibitem{lingva}
P.~A. Linnell.
\newblock Zero divisors and group von {N}eumann algebras.
\newblock {\em Pacific J. Math.}, 149:349--363, 1991.

\bibitem{lin309}
P.~A. Linnell.
\newblock Division rings and group von neumann algebras.
\newblock {\em Forum Math.}, 5(6):561--576, 1993.

\bibitem{lin}
P.~A. Linnell.
\newblock Analytic versions of the zero divisor conjecture.
\newblock In {\em Geometry and cohomology in group theory (Durham 1994)}, 252.
  London Math. Soc. Lecture Note, 1998.

\bibitem{malcev}
A.~I. Malcev.
\newblock On embedding of group algebras in a division algebra (in {R}ussian).
\newblock {\em Dokl. Akad. Nauk}, (60):1499--1501, 1948.

\bibitem{neumann}
B.~H. Neumann.
\newblock On ordered division rings.
\newblock {\em Trans. Amer. Math. Soc.}, 66:202--252, 1949.

\bibitem{KLM}
P.~A.~Linnell P.~H.~Kropholler and J.~A. Moody.
\newblock Applications of a new {$K$}-theoretic theorem to soluble group rings.
\newblock {\em Proc. Amer. Math. Soc.}, 104:675--684, 1988.

\end{thebibliography}
\bibliographystyle{plain}
\end{document}